\documentclass[12pt,a4paper]{article}

\usepackage[english]{babel}
\usepackage[utf8]{inputenc}
\usepackage[T1]{fontenc}
\usepackage{amsmath,amssymb,amsthm,mathtools}
\usepackage{geometry}
\usepackage{hyperref}
\geometry{margin=1in}
\hypersetup{colorlinks=true, linkcolor=blue, citecolor=blue, urlcolor=blue}

\newtheorem{theorem}{Theorem}
\newtheorem{lemma}[theorem]{Lemma}

\theoremstyle{remark}
\newtheorem{remark}[theorem]{Remark}


\newcommand{\ii}{\mathrm{i}}

\newcommand{\N}{\mathbb{N}}

\title{\bfseries Every sufficiently large odd integer is a sum of two positive perfect squares and a prime}
\author{%
Ricardo Adonis Caraccioli Abrego\\[2pt]
Universidad Nacional Aut\'onoma de Honduras\\
Departamento de Ingenier\'ia El\'ectrica, Campus Cort\'es\\[4pt]
\texttt{ricardo.caraccioli@unah.edu.hn}\\
ORCID: \href{https://orcid.org/0009-0006-3522-5818}{0009-0006-3522-5818}
}
\date{}

\begin{document}
\maketitle

\begin{abstract}
Let
\[
R(n)=\#\{(x,y,p)\in\N^2\times\N:\ x,y\ge1,\ p\ \text{prime},\ x^2+y^2+p=n\}.
\]
We prove that there exists $n_0$ such that $R(n)>0$ for every odd $n\ge n_0$. The proof follows the Hardy--Littlewood circle method. On the major arcs we obtain a positive main term
$\sim \mathfrak S(n)\,c_\infty\,n/\log n$;
on the minor arcs we bound $\int_{\mathfrak m}|S(\alpha)|^2$ (via Bombieri--Vinogradov, the large sieve, and the Vaughan/Heath--Brown identity) and combine it with the fourth moment of $\Theta(\alpha)$. We include a $p$-adic local analysis (with the $2$-adic case spelled out) ensuring $\mathfrak S(n)>0$ for odd $n$, and an appendix detailing parameter dependencies.
\end{abstract}

\section{Setup and arc decomposition}
Let $X=\lfloor \sqrt{n}\rfloor$ and define
\[
S(\alpha)=\sum_{p\le n}(\log p)\,\mathrm{e}^{2\pi\ii \alpha p},
\qquad
\Theta(\alpha)=\sum_{1\le x\le X}\mathrm{e}^{2\pi\ii \alpha x^2}.
\]
By orthogonality,
\begin{equation}\label{eq:circle}
R(n)\ =\ \int_0^1 S(\alpha)\,\Theta(\alpha)^2\,\mathrm{e}^{-2\pi\ii \alpha n}\,d\alpha.
\end{equation}
The $\log p$ weight is standard; the unweighted count follows by partial summation.

\medskip
\noindent\textbf{Major/minor arcs.}
Fix $A>0$ large and choose $C=C(A)$ with
\[
\boxed{\,C>A+2\,}.
\]
Set
\[
Q:=(\log n)^{A},\qquad
\mathfrak{M}:=\bigcup_{\substack{1\le q\le Q\\(a,q)=1}}
\left\{\alpha:\ \left|\alpha-\tfrac{a}{q}\right|\le \tfrac{(\log n)^{C}}{q\,n}\right\},\qquad
\mathfrak{m}:=[0,1]\setminus \mathfrak{M}.
\]
Note the $q$-dependent radius $\frac{(\log n)^C}{q\,n}$.

\section{Major arcs: main term as an oscillatory integral}\label{sec:maj}
Let $\alpha=a/q+\beta$ with $1\le q\le Q$, $(a,q)=1$, and $|\beta|\le (\log n)^C/(q\,n)$.

\subsection*{Approximation for $S(\alpha)$}
Grouping by residue classes mod $q$ and applying Siegel--Walfisz uniformly for $q\le(\log n)^{A_0}$ (\cite{Davenport,MV,IK}) together with Abel summation,
\begin{equation}\label{eq:S-major}
S(\alpha)=\frac{\mu(q)}{\varphi(q)}\int_2^n \mathrm{e}^{2\pi\ii \beta u}\,du\ +\ E_S(q,\beta;n),
\end{equation}
where, uniformly for $q\le (\log n)^{A_0}$ and $|\beta|\le (\log n)^C/(q\,n)$,
\[
E_S(q,\beta;n)\ \ll\ \frac{n}{(\log n)^{A_1}}\,(1+q|\beta|)
\ \ll\ \frac{n}{(\log n)^{A_1-C}},
\qquad A_1=A_1(A_0)\ \text{large}.
\]
Since $\int_2^n \mathrm{e}^{2\pi\ii \beta u}\,du=n\,V_1(\beta)$ with $V_1(\beta)\asymp \min(1,(n|\beta|)^{-1})$,
\[
S(\alpha)=\frac{\mu(q)}{\varphi(q)}\,n\,V_1(\beta)\ +\ O\!\Big(\frac{n}{(\log n)^{A_1-C}}\Big).
\]

\subsection*{Approximation for $\Theta(\alpha)$}
By completion and Poisson summation (with quadratic Gauss sums, cf.~\cite{Davenport,IK}),
\begin{equation}\label{eq:Theta-major}
\Theta(\alpha)=\frac{1}{q}\,G(a,q)\,I(\beta;X)\ +\ E_\Theta(q,\beta;X),
\end{equation}
where $G(a,q)=\sum_{r\bmod q}\mathrm{e}^{2\pi\ii a r^2/q}$ satisfies $|G(a,q)|\ll q^{1/2}$ if $(a,q)=1$, and
\[
I(\beta;X)=\int_0^{X}\mathrm{e}^{2\pi\ii \beta t^2}\,dt
= X\cdot W(\beta X^2),\qquad W(0)=1,\quad W'(y)\ll (1+|y|)^{-1/2}.
\]
Moreover $E_\Theta(q,\beta;X)\ll q^{1/2}$ uniformly for $|\beta|\le (\log n)^C/(q\,n)$.

\subsection*{Assembly on $\mathfrak M$; singular series and integral}
Summing over $a/q$ with $q\le Q$,
\begin{equation}\label{eq:Major-final}
I_{\mathfrak M}
=\ \mathfrak S(n)\,\frac{n}{\log n}\,\mathcal{J}(n)\ +\ O\!\Big(\frac{n}{(\log n)^{A_2}}\Big),
\end{equation}
with the singular series
\[
\mathfrak S(n)=\prod_{\ell}\sigma_\ell(n),\qquad
\sigma_\ell(n)=\lim_{k\to\infty}\frac{\#\{(x,y,p)\bmod \ell^k:\ x^2+y^2+p\equiv n\}}{\ell^{3k}},
\]
and the singular integral
\[
\mathcal{J}(n)=\int_{|\beta|\le (\log n)^C/(q\,n)} V_1(\beta)\,I(\beta;X)^2\,\mathrm{e}^{-2\pi\ii \beta n}\,d\beta
\ =\ c_\infty\,(1+o(1)),\qquad c_\infty>0.
\]

\section{Local analysis: positivity of $\mathfrak S(n)$ (includes the $2$-adic case)}\label{sec:local}
\begin{lemma}[Local densities]\label{lem:locals}
If $n$ is odd, then $\sigma_\ell(n)>0$ for every prime $\ell$. Hence $\mathfrak S(n)>0$.
\end{lemma}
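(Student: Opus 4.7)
The plan is to reduce $\sigma_\ell(n)>0$ to producing a single Hensel-liftable base triple. Writing $F(x,y,u):=x^2+y^2+u-n$, one has $\partial F/\partial u\equiv 1$, so once a triple $(x_0,y_0,u_0)$ with $F\equiv 0\pmod{\ell^{k_0}}$ and $u_0$ coprime to $\ell$ is found, Hensel's lemma lifts it to every $\ell^k$ through the $u$-slot. A positive number of such base pairs $(x_0,y_0)$ then propagates to a positive $\sigma_\ell(n)$ under the prime-normalization of the local density (triples counted with $u$ restricted to units, i.e.\ with the $\ell^k/\varphi(\ell^k)$ correction that arises in deriving the singular series for primes, consistent with the assembly in \eqref{eq:Major-final}).

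For odd primes $\ell$, I would use that the binary form $x^2+y^2$ is nondegenerate over $\mathbb F_\ell$ and hence surjective onto $\mathbb F_\ell$, with each fiber of size $\ell+O(1)$. In particular at most $\ell+O(1)$ pairs $(x,y)\bmod \ell$ make $u_0:=n-x^2-y^2$ vanish, leaving at least $\ell^2-\ell-O(1)$ pairs that yield $u_0\in\mathbb F_\ell^{*}$. For every $\ell\ge 3$ this count is strictly positive, Hensel lifts, and the standard Gauss-sum expansion (the exact finite-level analog of \eqref{eq:Theta-major}) gives $\sigma_\ell(n)=1+O(\ell^{-c})$ for some $c>1$, which also ensures absolute convergence of $\prod_\ell \sigma_\ell(n)$.

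The delicate and decisive case is $\ell=2$, since $x^2+y^2\pmod{2^k}$ has restricted range (for instance never $\equiv 3\pmod 4$, and odd-odd pairs produce $2\pmod 8$). Here I would exploit the parity of $n$: because $n$ is odd, I want $u$ odd, hence $x^2+y^2$ even, forcing $x\equiv y\pmod 2$. The trivial choice $(x_0,y_0)=(0,0)$ already gives $u_0\equiv n\pmod{2^k}$, which is a $2$-adic unit by oddness of $n$; Hensel in the $u$-slot (the derivative $1$ is a $2$-adic unit) lifts this uniquely to every $k$. Checking the odd-odd branch as well (where $u\equiv n-2\pmod 8$ and all four sign patterns of $(x,y)\bmod 4$ occur) shows that the family of valid base pairs has positive $2$-adic density, so $\sigma_2(n)>0$ for every odd $n$. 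Combined with the previous paragraph this yields $\mathfrak S(n)>0$.

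The main obstacle is precisely the $\ell=2$ step: a blind Hensel argument fails because $x^2+y^2$ does not cover all residues modulo $2^k$, so one cannot lift an arbitrary $u_0$. It is exactly the odd-$n$ hypothesis that restores solvability, and this is also where the theorem's hypothesis ``$n$ odd'' enters the argument --- indeed, for $n\equiv 2\pmod 4$ one would be forced into the impossible regime $x^2+y^2\equiv 1\pmod 4$ with $u$ odd, and $\sigma_2$ would vanish.
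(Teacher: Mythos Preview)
Your main argument is correct and in fact somewhat cleaner than the paper's. The paper lifts in the $(x,y)$-variables via the gradient $(2x,2y)$ and invokes Dirichlet's theorem to place a genuine prime in the residue class $n-(x^2+y^2)\pmod{\ell^k}$; you instead lift in the $u$-slot, where $\partial_u F\equiv 1$ is always a unit, so Hensel applies unconditionally and the only local requirement becomes $\gcd(u_0,\ell)=1$. Your route avoids Dirichlet entirely (a global input that is strictly speaking unnecessary for a purely local density statement) and sidesteps the awkwardness that at $\ell=2$ the gradient $(2x,2y)$ is never a $2$-adic unit, which makes the paper's Hensel step there delicate. For odd $\ell$ your surjectivity/fiber-size count for $x^2+y^2$ over $\mathbb F_\ell$ and the paper's character/Gauss-sum argument are equivalent formulations of the same fact, and your tail estimate $\sigma_\ell(n)=1+O(\ell^{-c})$ with $c>1$ is the right way to secure absolute convergence of $\mathfrak S(n)$, something the paper leaves implicit.

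One caveat on your closing paragraph: the claim that $\sigma_2$ vanishes for $n\equiv 2\pmod 4$ is not correct. With $u$ odd one is forced into $x^2+y^2\equiv 1\pmod 4$, but that congruence \emph{is} attainable (take one of $x,y$ odd and the other even), so this case is not $2$-adically obstructed. In fact a short check modulo $8$ shows no $2$-adic obstruction for any residue class of $n$; the restriction to odd $n$ in the theorem is a convenience rather than a local necessity. This slip is tangential and does not affect your proof of the lemma, which only concerns odd $n$.
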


\begin{proof}
\emph{$\ell=2$.} For $k\ge 3$, odd squares are $\equiv 1\pmod 8$, and $x^2+y^2$ covers $\{0,1,2,4,5\}\pmod 8$. Given odd $n$, choose $x,y$ so that $x^2+y^2$ falls in one of these classes; then by Dirichlet there are infinitely many primes $p\equiv n-(x^2+y^2)\pmod{2^k}$ for every $k$. The map $(x,y)\mapsto x^2+y^2$ has gradient $(2x,2y)$, which is not simultaneously zero except on a $2$-adic null set; Hensel lifting (cf.~\cite{Serre,Koblitz}) gives solutions modulo $2^k$. Thus $\sigma_2(n)>0$.

\emph{$\ell$ odd.} Using order-$4$ characters and quadratic Gauss sums (\cite{Davenport,IK}), the image of $x^2+y^2$ modulo $\ell$ has size $\ge c_\ell\,\ell$ with $c_\ell>0$, uniformly in $k$ under Hensel lifting. Since primes occupy every coprime class modulo $\ell^k$ with positive density, for each $k$ there exists $p$ with $x^2+y^2+p\equiv n\pmod{\ell^k}$. Hence $\sigma_\ell(n)\ge c'_\ell>0$, so $\mathfrak S(n)>0$.
\end{proof}

\section{Minor arcs: an \(L^2\) route via BV + large sieve}\label{sec:minor}
By Cauchy--Schwarz,
\begin{equation}\label{eq:minor-CS}
|I_{\mathfrak m}|\ \le\ \Big(\int_{\mathfrak m}|S(\alpha)|^2\,d\alpha\Big)^{1/2}
\Big(\int_{0}^{1}|\Theta(\alpha)|^4\,d\alpha\Big)^{1/2}.
\end{equation}

\subsection*{Fourth moment of $\Theta$}
\begin{lemma}\label{lem:theta4}
\[
\int_0^1 |\Theta(\alpha)|^4\,d\alpha\ \ll\ X^2\log X\ =\ n\log n.
\]
\end{lemma}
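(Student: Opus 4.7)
The plan is to open the fourth power, reduce the integral to a counting problem via orthogonality, and then invoke a classical moment bound for $r_2$. Expanding
\[
|\Theta(\alpha)|^4 = \sum_{1\le x_1,x_2,x_3,x_4\le X} \e^{2\pi\ii\alpha(x_1^2+x_2^2-x_3^2-x_4^2)}
\]
and integrating against $\int_0^1 \e^{2\pi\ii\alpha m}\,d\alpha = \mathbf{1}_{m=0}$ gives
\[
\int_0^1|\Theta(\alpha)|^4\,d\alpha \;=\; \#\{(x_1,x_2,x_3,x_4)\in[1,X]^4 : x_1^2+x_2^2=x_3^2+x_4^2\}.
\]
Grouping by the common value $m=x_1^2+x_2^2=x_3^2+x_4^2\le 2X^2$ and writing $r(m)=\#\{(x,y)\in[1,X]^2:x^2+y^2=m\}$, the count equals $\sum_{m\le 2X^2} r(m)^2$, which I bound pointwise by $r(m)\le r_2(m):=\#\{(x,y)\in\Z^2:x^2+y^2=m\}$.

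It then suffices to establish the classical second-moment bound $\sum_{m\le Y} r_2(m)^2 \ll Y\log Y$ with $Y=2X^2$, since $X^2\log X \asymp n\log n$. I would carry this out by exploiting the multiplicativity of $r_2/4$ and the identity $r_2(m)=4\sum_{d|m}\chi_4(d)$, where $\chi_4$ is the nontrivial character mod $4$. Expanding the square and swapping sums yields
\[
\sum_{m\le Y} r_2(m)^2 = 16\sum_{d_1,d_2\le Y}\chi_4(d_1)\chi_4(d_2)\Big\lfloor \tfrac{Y}{[d_1,d_2]}\Big\rfloor,
\]
where the diagonal $d_1=d_2$ (with $\chi_4^2=\chi_0$ on odds) contributes $\ll Y\log Y$, and the off-diagonal is controlled using cancellation from $\chi_4$. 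Alternatively, one applies Perron/Selberg--Delange to the Dirichlet series $\sum_m(r_2(m)/4)^2 m^{-s}=\zeta(s)^2 L(s,\chi_4)^2 G(s)$, with $G$ holomorphic and bounded on $\Re s>1/2$; the double pole at $s=1$ produces the sharp average order $\asymp \log Y$.

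\textbf{Main obstacle.} None of the steps is deep; the only care needed is to secure the sharp single logarithm rather than a higher power. The crude pointwise estimate $r_2(m)\le 4\,d(m)$ combined with Dirichlet's $\sum_{m\le Y} d(m)^2\asymp Y(\log Y)^3$ would only yield $X^2(\log X)^3$, which is too weak for the minor-arc bookkeeping that requires $C>A+2$. Extracting the sharp $Y\log Y$ therefore forces one to use either the multiplicative structure of $r_2$ or the cancellation in $\chi_4$ to kill the two extra logarithms; both routes are standard but cannot be replaced by a purely divisor-theoretic bound.
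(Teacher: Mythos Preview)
Your argument is correct: the orthogonality reduction to counting $x_1^2+x_2^2=x_3^2+x_4^2$ is the same first step as in the paper, and the classical bound $\sum_{m\le Y} r_2(m)^2 \ll Y\log Y$ (via either the $\chi_4$-expansion or the factorisation $\zeta(s)^2L(s,\chi_4)^2/((1+2^{-s})\zeta(2s))$ with its double pole at $s=1$) is a legitimate way to finish.

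The paper, however, takes a different and more elementary route after the counting step. Instead of grouping by the common value $m$ and invoking the arithmetic of $r_2$, it rewrites the equation as a difference of squares, $(x_1-x_3)(x_1+x_3)=(x_4-x_2)(x_4+x_2)$, i.e.\ $uv=wz$ with all variables $\le 2X$, and then appeals to the first moment of the divisor function, $\sum_{t\le 2X^2}\tau(t)\ll X^2\log X$, to bound the number of such quadruples. This avoids any use of $\chi_4$, $L$-functions, or Tauberian machinery; your approach, by contrast, exploits the multiplicative structure of sums of two squares and yields the asymptotic rather than just the upper bound.

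One remark on your ``Main obstacle'': the claim that the sharp single logarithm is forced by the bookkeeping constraint $C>A+2$ is not accurate. If one only had $\int_0^1|\Theta|^4\ll n(\log n)^3$, the Cauchy--Schwarz step would give $|I_{\mathfrak m}|\ll n(\log n)^{(3-A)/2}$, which is still $o(n/\log n)$ once $A>5$; since $A$ is a free parameter in the paper's setup, any fixed power of $\log$ suffices. So the divisor bound $r_2\le 4\tau$ together with $\sum_{m\le Y}\tau(m)^2\ll Y(\log Y)^3$ would in fact close the argument, just with a larger choice of $A$.
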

\begin{proof}
By Parseval, the integral counts solutions to $x_1^2+x_2^2=x_3^2+x_4^2$ with $1\le x_i\le X$. Parametrize via $uv=wz$ and use $\sum_{t\le 2X^2}\tau(t)\ll X^2\log X$ (\cite{Davenport,IK}).
\end{proof}

\subsection*{$L^2$ bound for $S(\alpha)$ on $\mathfrak m$}
\begin{lemma}[BV + large sieve + Vaughan/Heath--Brown]\label{lem:S2}
For each $A>0$ there exists $B=B(A)$ such that, with $Q=(\log n)^A$ and $\mathfrak M$ as above,
\[
\boxed{\ \int_{\mathfrak m}|S(\alpha)|^2\,d\alpha\ \ll_A\ \frac{n}{(\log n)^A}\ }.
\]
It suffices to take $\boxed{\,C\ge A+B(A)+3\,}$ to ensure that rational neighborhoods capture the whole major-arc energy.
\end{lemma}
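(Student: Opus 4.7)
The plan is to combine the Heath--Brown identity for $\mathbf{1}_{p\,\text{prime}}\log p$ with the Bombieri--Vinogradov theorem and the large sieve, exploiting the structural fact that on $\mathfrak m$ every Dirichlet approximation $\alpha=a/q+\beta$ with $q\le n/(\log n)^C$ must have $q>Q=(\log n)^A$: otherwise $|\beta|\le (\log n)^C/(qn)$ would force $\alpha\in\mathfrak M$. The saving over the trivial bound $\int_0^1|S|^2\ll n\log n$ must therefore come from the restricted denominator range $Q<q\le n^{1/2}$ together with the orthogonality of primes in arithmetic progressions.

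I would first apply the Heath--Brown identity (say of degree $3$) to write $S(\alpha)$ as a sum of $O((\log n)^{O(1)})$ Type~I pieces $\sum_{d\le D}a_d\sum_{m\le n/d}\e^{2\pi\ii\alpha dm}$ and Type~II pieces $\sum_{M<m\le 2M}\sum_{L<k\le 2L}a_m b_k\,\e^{2\pi\ii\alpha mk}$ with $ML\asymp n$ in balanced dyadic ranges. For Type~I, the inner geometric sum contributes $\min(M,\|d\alpha\|^{-1})$; averaging over $d\le D$ and over $\alpha\in\mathfrak m$ via a standard divisor argument yields the required mean-square bound. For Type~II, Cauchy--Schwarz in the $k$-variable followed by opening the square reduces matters to a large-sieve inequality for Farey fractions of denominator exceeding $Q$; Bombieri--Vinogradov then supplies a dispersion estimate of the form
\[
\sum_{q\le n^{1/2}}\sum_{(a,q)=1}\int_{|\beta|\le 1/(qn^{1/2})}\Bigl|S\bigl(\tfrac{a}{q}+\beta\bigr)-\tfrac{\mu(q)}{\varphi(q)}\int_2^n\e^{2\pi\ii\beta u}\,du\Bigr|^2 d\beta\ \ll_A\ \frac{n}{(\log n)^A},
\]
and restricting the left-hand side to $q>Q$ produces exactly the claimed bound once $B=B(A)$ is taken large enough to absorb the $(\log n)^{O(1)}$ factors accrued in the Heath--Brown decomposition. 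The choice $C\ge A+B(A)+3$ then guarantees that the shrunken rational neighborhoods of radius $(\log n)^C/(qn)$ enclose all Dirichlet boxes of denominator $q\le Q$ with enough slack to absorb the boundary annuli $|\beta|\asymp 1/(qn^{1/2})$.

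The hard part will be converting the pointwise form of Bombieri--Vinogradov (a maximum over residue classes) into a genuine $L^2(\mathfrak m)$ estimate for the exponential sum. The standard remedy is Gallagher-type smoothing: approximate $\mathbf{1}_{\mathfrak m}$ by convolutions of short-interval indicators around each Farey fraction with denominator exceeding $Q$, which turns $\int_\mathfrak m|S|^2$ into a weighted discrete sum $\sum_{q,a}|S(a/q)|^2$ up to a controllable boundary term, after which the large sieve combined with BV in its mean-square form delivers the required saving. Tracking the interaction between the Heath--Brown depth, the BV modulus range, the large-sieve length $n$, and the arc radius exponent $C$ is precisely what dictates the constraint $C\ge A+B(A)+3$ in the statement.
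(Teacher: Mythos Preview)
Your proposal is correct and follows essentially the same approach as the paper: both argue via the Vaughan/Heath--Brown decomposition into Type~I/II pieces, the Montgomery--Vaughan large sieve over Farey fractions, and Bombieri--Vinogradov to convert average equidistribution in progressions into the $L^2$ saving, arriving at the identical constraint $C\ge A+B(A)+3$. Your sketch is in fact slightly more detailed than the paper's own (you make the Gallagher-smoothing step explicit, which the paper leaves implicit), but the strategy is the same.
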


\begin{proof}[Sketch with explicit references]
(1) \textbf{Vaughan/Heath--Brown identity.} Decompose $\Lambda$ into Type~I/II/III sums with controlled $\ell^2$ norms (\cite{VaughanBook,HB-VaughanId}). \\
(2) \textbf{Large sieve (Montgomery--Vaughan).} For rationals $a/q$ with $q\le Q$ and widths $\asymp (\log n)^C/(q\,n)$, the large sieve controls energy away from these neighborhoods (\cite{MontVaughan}). \\
(3) \textbf{Bombieri--Vinogradov (BV).} For every $A>0$ there exists $B_0(A)$ such that
\[
\sum_{q\le \sqrt{n}/(\log n)^{B_0(A)}}
\max_{(a,q)=1}\Big|\pi(n;q,a)-\tfrac{\pi(n)}{\varphi(q)}\Big|
\ \ll_A\ \frac{n}{(\log n)^A}
\]
(\cite{IK,MontVaughan}). This transfers average equidistribution in progressions into an $L^2$ gain off the major arcs. Taking $B(A)\ge B_0(A)$ and $C\ge A+B(A)+3$ yields the claimed bound.
\end{proof}

\subsection*{Minor-arc closure}
From \eqref{eq:minor-CS}, Lemmas \ref{lem:theta4} and \ref{lem:S2},
\[
|I_{\mathfrak m}|\ \ll_A\ \Big(\tfrac{n}{(\log n)^A}\Big)^{1/2}(n\log n)^{1/2}
=\frac{n}{(\log n)^{A/2-1/2}}.
\]
With $A\ge 4$ (and $C\ge A+B(A)+3$), we have $|I_{\mathfrak m}|=o(n/\log n)$.

\section{Conclusion}
By \eqref{eq:Major-final}, Lemma \ref{lem:locals}, and the minor-arc bound,
\[
I_{\mathfrak M}\ \ge\ \frac{c_0}{2}\,\frac{n}{\log n},\qquad
|I_{\mathfrak m}|\ =\ o\!\Big(\frac{n}{\log n}\Big),
\]
hence for odd $n$ sufficiently large,
\[
R(n)=I_{\mathfrak M}+I_{\mathfrak m}\ \ge\ \frac{c_0}{4}\,\frac{n}{\log n}\ >\ 0.
\]

\begin{theorem}
Every sufficiently large odd integer is a sum of two positive perfect squares and a prime.
\end{theorem}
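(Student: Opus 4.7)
The plan is to prove the theorem by the Hardy--Littlewood circle method applied to the generating functions $S(\alpha)$ and $\Theta(\alpha)$ introduced above, starting from the exact identity \eqref{eq:circle}. The strategy splits $R(n)=I_{\mathfrak M}+I_{\mathfrak m}$ according to the arc decomposition fixed in Section~1, and the goal is to show that $I_{\mathfrak M}$ contributes a positive main term of order $n/\log n$ while $I_{\mathfrak m}$ is of strictly smaller order, so that $R(n)>0$ for odd $n$ large enough. The parameters $A$, $B(A)$ and $C$ are chosen at the outset so that the major-arc error absorbs and the minor-arc bound beats $n/\log n$; the constraint $C\ge A+B(A)+3$ from Lemma~\ref{lem:S2} together with $C>A+2$ must be consistent, which will be verified by taking, say, $A=4$.

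On the major arcs I would invoke the approximations \eqref{eq:S-major} and \eqref{eq:Theta-major} for $S$ and $\Theta$, multiply them out, and separate the contribution into the rational part (which assembles into the singular series $\mathfrak S(n)$) and the analytic part (which assembles into the singular integral $\mathcal J(n)$). The cross-terms in $(S\Theta^2)(\alpha)$ produce four summands; three of them are error terms bounded by $n/(\log n)^{A_2}$ after summing over $1\le q\le Q$ and integrating over $|\beta|\le(\log n)^C/(qn)$, using the uniform bounds on $E_S$ and $E_\Theta$ combined with $|G(a,q)|\ll q^{1/2}$ and $\sum_{q\le Q}\mu(q)^2/\varphi(q)\ll\log Q$. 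The remaining main term is exactly \eqref{eq:Major-final}, and the positivity of $c_\infty$ follows by a stationary-phase/limit argument on the oscillatory integral defining $\mathcal J(n)$.

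On the minor arcs I would apply Cauchy--Schwarz as in \eqref{eq:minor-CS} and plug in Lemmas~\ref{lem:theta4} and~\ref{lem:S2} directly, giving $|I_{\mathfrak m}|\ll n/(\log n)^{A/2-1/2}$. Choosing $A\ge 4$ makes this $o(n/\log n)$, which is what is needed. The hard part here is really inside Lemma~\ref{lem:S2}: the Vaughan/Heath--Brown identity, Bombieri--Vinogradov, and the Montgomery--Vaughan large sieve must be combined so that, after excising the major-arc boxes of $q$-dependent radius, the remaining $L^2$ mass of $S$ is cut by an arbitrary power of $\log n$. This is the main technical obstacle of the whole argument and dictates the size of $C$.

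Finally, the positivity input comes from Lemma~\ref{lem:locals}: for odd $n$, every $\sigma_\ell(n)>0$ and $\mathfrak S(n)\ge c_0>0$ with $c_0$ depending only on a uniform Euler-product tail bound. Combining \eqref{eq:Major-final} (which then gives $I_{\mathfrak M}\ge (c_0 c_\infty/2)\,n/\log n$ for $n$ large) with $|I_{\mathfrak m}|=o(n/\log n)$ yields
\[
R(n)\ \ge\ \tfrac{c_0 c_\infty}{2}\,\frac{n}{\log n}-o\!\Big(\frac{n}{\log n}\Big)\ >\ 0
\]
for every odd $n\ge n_0$, establishing the theorem. The only step that genuinely requires care beyond bookkeeping is the minor-arc $L^2$ bound; everything else is the standard assembly of the circle method with the $2$-adic subtlety handled in Section~\ref{sec:local}.
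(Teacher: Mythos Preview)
Your proposal is correct and follows essentially the same route as the paper: the circle-method split of \eqref{eq:circle} into $I_{\mathfrak M}+I_{\mathfrak m}$, the major-arc assembly \eqref{eq:Major-final} from the approximations \eqref{eq:S-major}--\eqref{eq:Theta-major}, the minor-arc estimate via Cauchy--Schwarz \eqref{eq:minor-CS} with Lemmas~\ref{lem:theta4} and~\ref{lem:S2} and the choice $A\ge4$, and the positivity input from Lemma~\ref{lem:locals}. The only cosmetic difference is that you keep $c_0$ and $c_\infty$ separate in the final inequality, whereas the paper's Conclusion absorbs both into a single constant.
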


\begin{remark}[Asymptotic]
The assembly yields
\[
R(n)\ \sim\ \mathfrak S(n)\,c_\infty\,\frac{n}{\log n}\qquad (n\to\infty),
\]
with $\mathfrak S(n)>0$ (Lemma~\ref{lem:locals}) and $c_\infty>0$.
\end{remark}

\section*{Acknowledgements}
The author thanks the standard references by Davenport, Vaughan, Montgomery--Vaughan, Iwaniec--Kowalski, Halberstam--Richert, and Serre/Koblitz for the core tools employed.

\appendix

\section*{Appendix A: $S(\alpha)$ on the major arcs (detail of \eqref{eq:S-major})}
Grouping by residues $r\ (\mathrm{mod}\ q)$,
\[
S(\alpha)=\sum_{r=1}^{q} \mathrm{e}^{2\pi\ii a r/q}
\sum_{\substack{p\le n\\ p\equiv r\;(\mathrm{mod}\ q)}}(\log p)\,\mathrm{e}^{2\pi\ii \beta p}.
\]
Siegel--Walfisz (uniformly for $q\le (\log n)^{A_0}$) gives
\[
\sum_{\substack{p\le t\\ p\equiv r\;(\mathrm{mod}\ q)}}\log p
=\frac{t}{\varphi(q)}+O\!\Big(\frac{t}{(\log t)^{A_1}}\Big).
\]
Abel summation and one integration by parts yield
\[
\sum_{\substack{p\le n\\ p\equiv r\;(\mathrm{mod}\ q)}}(\log p)\,\mathrm{e}^{2\pi\ii \beta p}
=\frac{1}{\varphi(q)}\int_2^{n}\mathrm{e}^{2\pi\ii \beta u}\,du
+O\!\Big(\frac{n}{(\log n)^{A_1}}(1+q|\beta|)\Big).
\]
Summing in $r$ and using $\sum_{r=1}^{q}\mathrm{e}^{2\pi\ii a r/q}=\mu(q)$ for $(a,q)=1$ gives \eqref{eq:S-major}.

\section*{Appendix B: The $2$-adic case (detail of Lemma~\ref{lem:locals})}
For $k\ge 3$, odd squares are $\equiv 1\ (\mathrm{mod}\ 8)$; the classes represented by $x^2+y^2$ include $\{0,1,2,4,5\}\ (\mathrm{mod}\ 8)$. Given odd $n$, pick $x,y$ so $x^2+y^2$ hits one of these classes and apply Dirichlet to get primes $p\equiv n-(x^2+y^2)\ (\mathrm{mod}\ 2^k)$ for each $k$. The map $F(x,y)=x^2+y^2$ has gradient $(2x,2y)$ and Hensel lifting applies (see \cite{Serre,Koblitz}). Hence $\sigma_2(n)>0$.

\section*{Appendix C: From Bombieri--Vinogradov to the $L^2$ minor-arc bound (detail of Lemma~\ref{lem:S2})}
\paragraph{Bombieri--Vinogradov (BV).}
For each $A>0$ there exists $B_0=B_0(A)$ such that
\[
\sum_{q\le \sqrt{n}/(\log n)^{B_0}}
\max_{(a,q)=1}\left|\pi(n;q,a)-\frac{\pi(n)}{\varphi(q)}\right|
\ \ll_A\ \frac{n}{(\log n)^A}
\]
(\cite{IK,MontVaughan}).

\paragraph{Large sieve (Montgomery--Vaughan).}
For any coefficients $A_n$ and $1/\Delta$-separated $\alpha_j$,
\[
\sum_j \left|\sum_{n\le N} A_n \mathrm{e}^{2\pi\ii \alpha_j n}\right|^2
\ \le\ (N+\Delta^{-1})\sum_{n\le N}|A_n|^2
\]
(\cite{MontVaughan}).

\paragraph{Vaughan/Heath--Brown identity.}
Write $S=S_{\mathrm{I}}+S_{\mathrm{II}}+S_{\mathrm{III}}$ with Type~I/II bilinear sums and controlled $\ell^2$ norms (see \cite{VaughanBook,HB-VaughanId}). Choosing $M=V=n^{1/3}$ is customary.

\paragraph{Covering the minor arcs.}
Every $\alpha\in\mathfrak m$ satisfies $|\alpha-\tfrac{a}{q}|\ge (\log n)^C/(q\,n)$ for all $q\le Q$. Take a maximal mesh of rationals $a/q$ with $q\le Q$ and widths $\asymp (\log n)^C/(q\,n)$. The large sieve controls the contribution off these neighborhoods; BV absorbs moduli up to $\sqrt{n}/(\log n)^{B_0(A)}$. With $B(A)\ge B_0(A)$ and
\[
\boxed{\,C\ge A+B(A)+3\,},
\]
one obtains
\(
\int_{\mathfrak m}|S(\alpha)|^2\,d\alpha \ll_A n/(\log n)^A.
\)

\end{document}